\def\5n{\negthinspace \negthinspace \negthinspace \negthinspace \negthinspace }
\def\4n{\negthinspace \negthinspace \negthinspace \negthinspace }
\def\3n{\negthinspace \negthinspace \negthinspace }
\def\2n{\negthinspace \negthinspace }
\def\1n{\negthinspace }
\def\dbE{\mathbb{E}}       
\def\dbF{\mathbb{F}} \def\sF{\mathscr{F}}      
\def\dbG{\mathbb{G}}         
\def\dbH{\mathbb{H}}   \def\cH{{\cal H}}
\def\dbN{\mathbb{N}}
\def\dbR{\mathbb{R}}       \def\IR{\mathbb{R}}
\def\Om{\Omega}
\def\ss{\smallskip}                
\def\ms{\medskip}                
\def\ds{\displaystyle}
        \def\q{\quad}                      
\def\ns{\noalign{\ss}}    \def\qq{\qquad}                    
    \def\hb{\hbox}                     
         \def\rf{\eqref}                    
  \def\deq{\triangleq}               
            \def\({\Big (}
\def\les{\leqslant}                  \def\){\Big )}
       \def\esssup{\mathop{\rm esssup}}   \def\[{\Big[}
           \def\]{\Big]}
                 \def\o{\omega}
    \def\t{\tau}     \def\f{\varphi}  \def\i{\infty}   
\def\bde{\begin{definition}\label}    \def\ede{\end{definition}}
\def\be{\begin{equation}}
\def\bel{\begin{equation}\label}      \def\ee{\end{equation}}
\def\bt{\begin{theorem}\label}        \def\et{\end{theorem}}
\def\bc{\begin{corollary}\label}      \def\ec{\end{corollary}}
\def\bl{\begin{lemma}\label}          \def\el{\end{lemma}}
\def\bp{\begin{proposition}\label}    \def\ep{\end{proposition}}
\def\bas{\begin{assumption}\label}    \def\eas{\end{assumption}}
\def\br{\begin{remark}\label}         \def\er{\end{remark}}
\def\bex{\begin{example}\label}       \def\ex{\end{example}}
\def\ba{\begin{array}}                \def\ea{\end{array}}
\def\ben{\begin{enumerate}}           \def\een{\end{enumerate}}
\newtheorem{theorem}{Theorem}[section]
\newtheorem{definition}[theorem]{Definition}
\newtheorem{proposition}[theorem]{Proposition}
\newtheorem{corollary}[theorem]{Corollary}
\newtheorem{lemma}[theorem]{Lemma}
\newtheorem{remark}[theorem]{Remark}
\newtheorem{assumption}[theorem]{Assumption}
\newtheorem{example}[theorem]{Example}
\begin{document}
\journal{Statist. Probab. Lett. on 8th December 2021}
\begin{frontmatter}
\title{On the uniqueness result for the BSDE with continuous coefficient}
\author[math]{Yufeng Shi}
\author[math]{Zhi Yang\corref{correspondingauthor}}
\ead{yzyyss85@163.com}
\address[math]{Institute for Financial Studies and School of Mathematics, Shandong University, Jinan 250100, Shandong, China}
\cortext[correspondingauthor]{Corresponding author}
\begin{abstract}
In this paper, we study one-dimensional backward stochastic differential equation (BSDE, for short), whose coefficient $f$ is Lipschitz in $y$ but only continuous in $z$. In addition, if the terminal condition $\xi$ has bounded Malliavin derivative, we prove some uniqueness results for the BSDE with quadratic and linear growth in $z$, respectively.
\end{abstract}

\begin{keyword}
Backward stochastic differential equation, uniqueness result, Malliavin calculus.

\end{keyword}

\end{frontmatter}
\section{Introduction}
We study backward stochastic differential equations (BSDEs) of the following type:
\begin{equation}\label{bsde}
Y_{t}=\xi+\int_{t}^{T} f\left(s, Y_{s}, Z_{s}\right) d s-\int_{t}^{T} Z_{s} d W_{s}, \quad 0 \leqslant t \leqslant T,
\end{equation}
where $\left(W_{t}\right)_{0 \leqslant t \leqslant T}$ be a standard $d$-dimensional Brownian motion on a probability space $(\Omega, \mathscr{F}, \mathbb{P})$, with $\mathbb{F} = \left(\mathscr{F}_{t}\right)_{0 \leqslant t \leqslant T}$ is the augmented natural filtration generated by $W$. Fixed $T>0$ the time horizon of the study is called the terminal time. The terminal condition $\xi$ is a $\mathscr{F}_{T}$-measurable random variable. The coefficient (or generator) $f$ of the BSDE \eqref{bsde} is assumed to be a function from $[0, T] \times \Omega \times \mathbb{R} \times$ $\mathbb{R}^{d}$ to $\mathbb{R}$ that is measurable with respect to Prog$\otimes \mathcal{B}(\mathbb{R}) \otimes \mathcal{B}\left(\mathbb{R}^{d}\right)$, where Prog is the progressive sigma-algebra on $[0, T] \times \Omega$. The triple $(\xi, T, f)$ is the parameters of the BSDE \eqref{bsde}.

\ms

Nonlinear BSDEs were first introduced by \cite{ref5}. They proved that when $\xi$ is square integrable and $f$ is uniformly Lipschitz in $(y,z)$ there exists a unique adapted solution with square integrability properties. Since then, BSDEs have been studied with great interest, due to their wide range of applications in mathematical finance, stochastic optimal control, and partial differential equations. In particular, many efforts have been made to relax the assumptions on the parameters $\xi $ and $f$ of the BSDE \eqref{bsde}. The full list of contributions is too long to give and we will only quote results in our framework. A class of BSDEs with the generator of quadratic growth concerning the variable $z$, has received a lot of attention in recent years. For convenience, hereafter, by a {\it quadratic BSDEs}, we mean that in BSDE \eqref{bsde}, the map $z\mapsto f(t,y,z)$ grows no more than quadratically. Concerning the scalar case, and restricting to the bounded terminal condition $\xi$, \cite{ref25} proved the existence of a solution $(Y,Z)$ such that $Y$ is a bounded scalar process. Since then, a large number of papers have dealt with extensions and applications.

\ms

In this paper, we will focus on the uniqueness results for the BSDEs in the one-dimensional case. Our main purpose is to strengthen the uniqueness result for the quadratic BSDEs with generators $f(t,y,z)$ that are only continuous in $z$. This is different from the previous works, which assume that the generators $f$ are convex functions with respect to $z$ (see e.g. \cite{ref08}, \cite{ref62}) or other additional conditions (see e.g. \cite{ref66}, \cite{ref69}, \cite{ref81}). On the other hand, we require the terminal conditions $\xi$ that have bounded Malliavin derivative. Let us recall our strategy in detail. To obtain the main result of this paper, we first use the sup-convolution approximation techniques (see \autoref{BSDEfbijin}) to get the approximate generators with regularity properties. Then the solution $(Y^n,Z^n)$ of the BSDE \rf{BSDEjieduan} with parameters $(\xi,f_n)$ converges to the maximal solution $(\overline{Y}, \overline{Z})$ of the BSDE \eqref{bsde} by the monotone stability proposition (see \autoref{BSDEbijin}). In addition, if the terminal condition $\xi$ is Malliavin differentiable with a bounded Malliavin derivative, then $Z^n$ is bounded uniformly with respect to $n$.
Thus, we can get that $\overline{Z}$ is bounded. Moreover, applying similar arguments (see \autoref{BSDEfbijin1}), the minimal solution $(\underline{Y},\underline{Z})$ of the BSDE \eqref{bsde} is also bounded (see \autoref{BSDEbijin1}). Finally, noting that the $\overline{Z}$ and $\underline{Z}$ are bounded, we localize the generator $f$ by $\bar{f}$, then under Assumptions (A1)-(A2) the BSDE with parameters $(\bar{f},\xi)$ has a unique solution. This implies that \autoref{main} holds. Let us emphasize that, applying the above-mentioned proof strategy, we can get a uniqueness result for the BSDEs with the generators that have linear growth in $z$ (see \autoref{main1}).
\ms

The paper is organized as follows. In Section 2, we introduce some notations and definitions that will be used in the sequel. The uniqueness of the solution to quadratic BSDE with the terminal condition that has bounded Malliavin derivative is obtained in Section 3. In Section 4, we prove the uniqueness of the solution for BSDE when the generator is only continuous with linear growth in $z$.

\section{Preliminaries}

Let us introduce some notations and spaces that will be used below.
For a positive integer $d$, any element $x \in \mathbb{R}^{d}$ will be identified to a column vector with $i$-th component $x^{i}$ and Euclidean norm denoted by $|x|$.
For $k\in\dbN$ and Euclidean spaces $\dbH$ and $\dbG$, denote by
$C^{k}_{b}(\dbH,\dbG)$ the set of functions of class $C^k$ from $\dbH$ to $\dbG$ whose partial
derivations of order less than or equal to $k$ are bounded.
For any real $p>1$, define
\begin{equation*}
\begin{aligned}
L_{\mathscr{F}_{T}}^{p}(\Omega ; \mathbb{H}) &=\left\{\xi: \Omega \rightarrow \mathbb{H} \mid \xi \text { is } \mathscr{F}_{T} \text {-measurable, }\|\xi\|_{L^{p}} \triangleq\left(\mathbb{E}|\xi|^{p}\right)^{\frac{1}{p}}<\infty\right\}, \\
L_{\mathscr{F}_{T}}^{\infty}(\Omega ; \mathbb{H}) &=\left\{\xi: \Omega \rightarrow \mathbb{H} \mid \xi \text { is } \mathscr{F}_{T} \text {-measurable, }\|\xi\|_{\infty} \triangleq \operatorname{esssup}_{\omega \in \Omega}|\xi(\omega)|<\infty\right\},
\end{aligned}
\end{equation*}
and for any $t\in[0,T)$, define
\begin{align*}
\ds L_\dbF^p(t,T;\dbH)\1n=&\ \1n\Big\{\f:[t,T]\1n\times\1n\Om\to\dbH\bigm|\f \hb{ is
$\dbF$-progressively measurable, }\\
\ns\ds&\qq
\|\f \|_{L_\dbF^p(t,T)}\deq\1n\(\dbE\int^T_t\1n|\f_s|^pds\)^{1\over p}\1n<\2n\i\Big\},\\
\ns\ds L_\dbF^\infty(t,T;\dbH)=&~
\Big\{\f:[t,T]\times\Om\to\dbH\bigm|\f \hb{ is $\dbF$-progressively measurable, }\\
\ns\ds&\qq
\|\f \|_{L_\dbF^\infty(t,T)}\deq\esssup_{(s,\o)\in[t,T]\times\Om}|\f_s(\o)|<\i\Big\},\\
\ns\ds S_\dbF^p(t,T;\dbH)=&~
\Big\{\f:[t,T]\times\Om\to\dbH\bigm|\f \hb{ is
$\dbF$-adapted, continuous, }\\
\ns\ds&\qq
\|\f \|_{S_\dbF^p(t,T)}\deq\Big\{\dbE\[\sup_{s\in[t,T]}|\f_s|^p\]\Big\}^{\frac{1}{p}}<\i\Big\},\\
\ns\ds S_\dbF^\infty(t,T;\dbH)=&~
\Big\{\f:[t,T]\times\Om\to\dbH\bigm|\f \hb{ is $\dbF$-progressively measurable, continuous }\\
\ns\ds&\qq
\|\f \|_{S_\dbF^\infty(t,T)}\deq\esssup_{(s,\o)\in[t,T]\times\Om}|\f_s(\o)|<\i\Big\},\\
\cH^2_{BMO}[t,T]=&~\Big\{\f \in L^2_\dbF(t,T;\dbH)\Bigm|\|\f\|_{\cH^2_{BMO}[t,T]} \triangleq \sup_{t \leqslant \t \leqslant T }\Big\|
\dbE_\t\[ \int_\t^T|\f_s|^2ds \] \Big\|_\i^{1\over 2}<\i\Big\}.
\end{align*}
where $\t$ is the stopping time and $\dbE_\t$ is the conditional expectation given $\sF_{\t}$.

\ms

Next, let us recall the notion of derivation on Wiener space,
 and for more information about Malliavin calculus we refer the readers to \cite{ref28}.
Denote by $\mathbb{S}$ the set of random variable $\xi$ of the form $\xi=\varphi(W(h^{1}),...,W(h^{n})),$
%
%
where $\varphi \in C^{\infty}_{b}(\mathbb{R}^{n},\mathbb{R})$, $h^{i}\in L^{2}([0,T];\mathbb{R}^{d})$ and $W(h^{i})= \int_0^T h^{i}_t dW_t$ is the Wiener integral for $i=1,...,n$.
To such a random variable $\xi$, we associate a ``derivative process" $\{ D_t \xi;\ t\in[0,T] \}$ defined as
$ D_t \xi\deq\sum_{i=1}^{n}\partial_{x_i} \varphi \big(W(h^{1}),...,W(h^{n})\big)h^{i}_t, \ \ t\in[0,T],$
whose components we denote by $D_{t}^{i} \xi$, for $i=1,...,d$.
For $\xi \in \mathbb{S}$, we define a kind of Sobolev norm by
$  \| \xi \|_{1,2}^{2}\deq\mathbb{E}\left[|\xi|^{2} + \int_0^T |D_{t}\xi|^{2} dt\right].$
It can be shown that the operator $D$ has a closed extension to the space $\mathbb{D}^{1,2}$, the closure of $\mathbb{S}$
with respect to the norm $\| \cdot \|_{1,2}^{2}$.

\ms

Throughout this paper, to avoid the additional Malliavin regularization technicalities (see \cite{ref35} for more detail), we only consider the deterministic generator $f$ case. We work with the following
\begin{definition}\label{bsdeddef}
A solution of the BSDE \eqref{bsde} is a pair $\left(Y_{t}, Z_{t}\right)_{0 \leqslant t \leqslant T}$ of adapted processes taking values in $\mathbb{R} \times \mathbb{R}^{d}$ such that $\int_{0}^{T}\left(\left|f\left(t, Y_{t}, Z_{t}\right)\right|+\left|Z_{t}\right|^{2}\right) d t<\infty$ and \eqref{bsde} holds for all $0 \leqslant t \leqslant T$, $\mathbb{P}$-a.s.
\end{definition}
This minimal definition will be completed later on by some further integrability assumptions. In order to simplify the notations, we sometimes write $(Y,Z)$ for the processes $\left(Y_{t}, Z_{t}\right)_{0 \leqslant t \leqslant T}$.

\section{The unique solution to the quadratic BSDE}

In this section, we study a unique solution for the quadratic BSDE \rf{bsde} with the continuous coefficient and the terminal condition that has bounded Malliavin derivative.

\ms

We work with the following assumptions:
\begin{itemize}
  \item [(A1)] The bounded terminal condition $\xi$ is in $\mathbb{D}^{1,2}$ and there exists a positive constant $L $ such that $\|\xi\|_{\infty} \leqslant L$ and $\left|D_{t}^{i} \xi\right| \leqslant L$, $d \mathbb{P} \otimes d t  $-a.e., for all $i=1, \ldots, d $.
  \item [(A2)] The generator $f$ is Borel measurable and continuous with respect to $y,z$, and there exists a positive constant $L$ such that for all $t\in[0,T]$ $y,y'\in\dbR$ and $z\in\dbR^d$
   \begin{align*}
  \ds |f(t,y,z)|  \les L(1+|y|+|z|^2) \text{ and } |f(t,y,z)-f(t,y',z)|  \les L|y-y'|.
   \end{align*}

  \item [(A3)] There is a positive constant $K$ such that for all $t\in[0,T]$, $y\in\dbR$ and $z,z'\in\dbR^d$,
      $$|f(t,y,z)-f(t,y,z')| \les K(1+|z|+|z'|)|z-z'|.$$
\end{itemize}

\ms

The following proposition was first obtained by Theorem 2.3 of \cite{ref25}, and some extended results can refer to \cite{ref6}, etc.

\begin{proposition}\label{BSDEmax}
If Assumptions (A1)-(A2) hold, then the BSDE \eqref{bsde} has the maximal solution $\left(\overline{Y}, \overline{Z}\right)$ (resp. the minimal solution $\left(\underline{Y}, \underline{Z}\right)$) in $S_{\mathbb{F}}^{\infty}(0, T ; \mathbb{R}) \times L_{\mathbb{F}}^{2}\left(0, T ; \mathbb{R}^{d}\right)$, i.e. for any solution $(Y,Z)$ of the BSDE \eqref{bsde}, $\overline{Y} \geqslant Y $ (resp. $\underline{Y} \leqslant Y $), $\mathbb{P}$-a.s..
\end{proposition}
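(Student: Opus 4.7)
My plan is to follow the classical Kobylanski scheme via sup/inf-convolution and monotone stability. I would first construct a non-increasing sequence of Lipschitz approximating generators that dominate $f$ from above, solve each approximate BSDE by the standard Lipschitz theory of Pardoux--Peng, verify uniform a priori bounds (crucially including the quadratic-growth constant so the approximation does not blow it up), and then pass to the monotone limit. The limiting pair furnishes the maximal solution; the minimal one is obtained symmetrically.

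\emph{Construction.} For each $n \in \mathbb{N}$ large enough, define the sup-convolution
\[
f_n(t,y,z) \;\deq\; \sup_{p \in \mathbb{R}^d}\bigl\{\, f(t,y,p) - n\,|z-p|\,\bigr\},
\]
and for the minimal solution its inf-convolution analogue
\[
\underline{f}_n(t,y,z) \;\deq\; \inf_{p \in \mathbb{R}^d}\bigl\{\, f(t,y,p) + n\,|z-p|\,\bigr\}.
\]
Using (A2), one shows $f_n$ is well defined for $n$ large, is $n$-Lipschitz in $z$, inherits Lipschitz continuity in $y$, remains jointly Borel measurable, satisfies the same quadratic bound $|f_n(t,y,z)| \les L(1+|y|+|z|^2)$ (with a modestly enlarged constant), and forms a pointwise non-increasing sequence converging to $f$ on $[0,T]\times\dbR\times\dbR^d$. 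This is exactly the content of the preceding Lemma \autoref{BSDEfbijin}, which I invoke directly.

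\emph{Approximate solutions and uniform bounds.} For each $n$, since $f_n$ is Lipschitz in $(y,z)$ with a quadratic bound and $\xi$ is bounded, the BSDE with parameters $(\xi, f_n)$ admits a unique solution $(Y^n,Z^n)\in S_\dbF^\infty\times L_\dbF^2$ by Pardoux--Peng plus the standard truncation/comparison argument for bounded terminal data. The key uniform bound
\[
\|Y^n\|_{S_\dbF^\infty(0,T)} \;\les\; C(L,T,\|\xi\|_\infty),
\]
comes from applying Itô's formula to $\phi(Y^n_t)$ with $\phi(y)=(e^{2LK y}-2LKy-1)/(2L^2K^2)$ (the Kobylanski exponential transform), which absorbs the quadratic-in-$z$ term into a nonnegative contribution and yields a bound depending only on $L$, $T$, and $\|\xi\|_\infty$ but not on $n$. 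By the comparison theorem applied to $f_n \ges f_{n+1}$, the sequence $Y^n$ is non-increasing in $n$, hence converges $\dbP$-a.s.\ to some process $\overline{Y}$ with $\|\overline Y\|_{S^\infty_\dbF}\les C$.

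\emph{Passage to the limit.} Here lies the main obstacle: the quadratic growth in $z$ prevents any $L^2$-type Lipschitz stability for $Z^n$. I would instead apply the monotone stability result \autoref{BSDEbijin} (Kobylanski's stability proposition): the monotone convergence of $Y^n$ together with the uniform $S^\infty$ bound and the structural condition (A2) forces $Y^n \to \overline{Y}$ in $S_\dbF^p$ for every $p\ges 1$ and $Z^n \to \overline{Z}$ in $L_\dbF^2$, with $(\overline Y,\overline Z)$ a solution of \rf{bsde}. The standard way to extract strong $L^2$ convergence of $Z^n$ is again via the exponential Itô expansion of $e^{\alpha(Y^n - Y^m)}$, but this is bundled into \autoref{BSDEbijin} and I would just cite it.

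\emph{Maximality and minimal solution.} For maximality, let $(Y,Z)$ be any solution of \rf{bsde} in $S_\dbF^\infty\times L_\dbF^2$. Since $f_n \ges f$, the comparison theorem (which applies to the Lipschitz generator $f_n$ against the solution $(Y,Z)$ of the BSDE with generator $f$) gives $Y^n \ges Y$ on $[0,T]$ $\dbP$-a.s., and passing to the limit yields $\overline Y \ges Y$. The minimal solution $(\underline Y,\underline Z)$ is produced identically using $\underline f_n$, which is non-decreasing and $\les f$; the same uniform bound and monotone stability argument (as in the symmetric Lemma \autoref{BSDEfbijin1} and Proposition \autoref{BSDEbijin1} referenced in the introduction) gives the claim.
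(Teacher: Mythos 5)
There is a concrete error in your construction. Under Assumption (A2) the generator has quadratic growth, $|f(t,y,p)|\les L(1+|y|+|p|^2)$, so the sup-convolution with a \emph{linear} penalization,
\begin{equation*}
f_n(t,y,z)=\sup_{p\in\mathbb{R}^d}\bigl\{f(t,y,p)-n|z-p|\bigr\},
\end{equation*}
is identically $+\infty$: taking $|p|\to\infty$, the term $f(t,y,p)$ can grow like $L|p|^2$ while the penalty $n|z-p|$ grows only linearly. You then assert that this "is exactly the content of \autoref{BSDEfbijin}", but that lemma uses the \emph{quadratic} penalization $n|z-v|^2$ precisely to dominate the quadratic growth (the linear penalization is what the paper reserves for the linear-growth setting of Section 4). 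This also breaks the next step: with the correct quadratic penalization, $f_n$ is \emph{not} globally Lipschitz in $z$ but only locally so, namely $|f_n(t,y,z_1)-f_n(t,y,z_2)|\les n(1+|z_1|+|z_2|)|z_1-z_2|$ (item (4) of \autoref{BSDEfbijin}), so Pardoux--Peng does not apply to the approximating BSDEs. You need instead an existence/uniqueness result for quadratic BSDEs with bounded terminal data under a local Lipschitz condition of type (A3) (this is how the paper proceeds, via \autoref{BSDEbounded}, or one can invoke Kobylanski's own existence plus the standard uniqueness under (A3)). The remaining architecture --- uniform $S^\infty$ bound via the exponential transform, monotonicity of $Y^n$ by comparison, monotone stability to pass to the limit, and comparison against an arbitrary bounded solution to get maximality --- is sound, and indeed mirrors what the paper does in \autoref{BSDEbijin} and \autoref{BSDEbijin1}.

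For context: the paper itself offers no proof of this proposition; it is quoted directly from Theorem 2.3 of Kobylanski (2000). Your plan of proving it by sup/inf-convolution, comparison and monotone stability is essentially the route the paper takes for its own approximation results later in Section 3, so once the penalization is corrected to $n|z-v|^2$ and the solvability of the approximating equations is justified by a quadratic (rather than Lipschitz) existence theorem, the argument goes through. Note also that for the maximality step the comparison must be applied between the locally Lipschitz generator $f_n$ and a merely continuous $f$ along an arbitrary solution $(Y,Z)\in S^\infty_{\mathbb{F}}\times L^2_{\mathbb{F}}$; this requires $Z\in\mathcal{H}^2_{BMO}$, which does hold for bounded solutions of quadratic BSDEs, but it deserves a sentence.
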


\begin{remark}
Notice that if $\left(\overline{Y}, \overline{Z}\right)$ and $(\hat{Y},\hat{Z})$ are both maximal solutions of BSDE \eqref{bsde}, then necessarily $\overline{Y} \equiv \hat{Y}$ and $\overline{Z} \equiv \hat{Z}$.
\end{remark}

\ms

Let us recall the comparison principle which is one of the most important properties of BSDEs. The following proposition which is obtained under quite general conditions is an extension of Theorem 7.3.1 of \cite{ref26}.
\begin{proposition}\label{BSDEcomparison}
Assume that $\xi$ and $f$  satisfy Assumptions (A1)-(A3), let $(Y,Z) \in  S_{\mathbb{F}}^{\infty}(0, T ; \mathbb{R}) \times L_{\mathbb{F}}^{2}\left(0, T ; \mathbb{R}^{d}\right)$ be the unique solution of the BSDE \eqref{bsde}, and $(Y',Z') \in  S_{\mathbb{F}}^{\infty}(0, T ; \mathbb{R}) \times L_{\mathbb{F}}^{2}\left(0, T ; \mathbb{R}^{d}\right)$ be a solution of the BSDE with parameters $(\xi',f')$ satisfying Assumptions (A1)-(A2). If $\xi \geqslant \xi'$, $\mathbb{P}$-a.s., and $f(t,Y'_t,Z'_t) \geqslant f'(t,Y'_t,Z'_t)$, $d\mathbb{P} \otimes dt$-a.e., then $Y_t \geqslant Y'_t$, $0 \leqslant t\leqslant T$, $\mathbb{P}$-a.s.
\end{proposition}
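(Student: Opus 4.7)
The plan is to run the standard linearization argument for BSDE comparison, but taking care that the quadratic growth in $z$ forces us into the BMO-martingale / Girsanov framework rather than a naive Lipschitz linearization.

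First I would set $\bar Y = Y - Y'$ and $\bar Z = Z - Z'$, so that
\begin{equation*}
\bar Y_t \;=\; (\xi - \xi') \;+\; \int_t^T \bigl[f(s, Y_s, Z_s) - f'(s, Y'_s, Z'_s)\bigr]\, ds \;-\; \int_t^T \bar Z_s\, dW_s.
\end{equation*}
Then I would split the generator difference into three pieces:
\begin{equation*}
f(s, Y_s, Z_s) - f'(s, Y'_s, Z'_s) \;=\; a_s\,\bar Y_s \;+\; \langle b_s, \bar Z_s\rangle \;+\; \Delta_s,
\end{equation*}
where $a_s = \bigl(f(s,Y_s,Z_s) - f(s,Y'_s,Z_s)\bigr)/\bar Y_s$ on $\{\bar Y_s \neq 0\}$ (and $0$ otherwise), $b_s$ is obtained componentwise from the identity $\langle b_s, \bar Z_s\rangle = f(s,Y'_s,Z_s) - f(s,Y'_s,Z'_s)$ by choosing $b_s$ colinear with $\bar Z_s$, and $\Delta_s = f(s,Y'_s,Z'_s) - f'(s,Y'_s,Z'_s) \geq 0$. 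Assumption (A2) yields $|a_s| \leq L$, and crucially (A3) yields $|b_s| \leq K(1 + |Z_s| + |Z'_s|)$.

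The central step, and the main obstacle, is to justify a Girsanov change of measure with drift $b$, even though $b$ is only bounded by a quadratic-growth envelope in the $Z$ processes. Here I would invoke the standard fact for quadratic BSDEs with bounded $Y$: since $Y, Y' \in S^\infty_{\mathbb F}(0,T;\mathbb R)$ and $f, f'$ have quadratic growth in $z$, applying It\^o's formula to $e^{\alpha Y}$ (or to $Y^2$) for a suitable constant $\alpha$ gives
\begin{equation*}
\sup_{\tau}\Bigl\|\mathbb E_\tau\!\!\int_\tau^T |Z_s|^2\,ds\Bigr\|_\infty < \infty, \qquad \sup_{\tau}\Bigl\|\mathbb E_\tau\!\!\int_\tau^T |Z'_s|^2\,ds\Bigr\|_\infty < \infty,
\end{equation*}
so $Z\cdot W$ and $Z'\cdot W$ are BMO martingales. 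Consequently $b \cdot W$ is BMO, the Dol\'eans exponential $\mathcal E(\int_0^\cdot b_s\,dW_s)$ is a uniformly integrable martingale (Kazamaki), and I may define $\mathbb Q$ by its terminal density and set $\widetilde W_t = W_t - \int_0^t b_s\, ds$, a $\mathbb Q$-Brownian motion.

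Rewriting the dynamics of $\bar Y$ under $\mathbb Q$ and applying It\^o's formula to the integrating factor $\Gamma_t = \exp(\int_0^t a_s\,ds)$ would give
\begin{equation*}
d(\Gamma_t \bar Y_t) \;=\; -\,\Gamma_t\, \Delta_t\, dt \;+\; \Gamma_t\, \bar Z_t\, d\widetilde W_t,
\end{equation*}
hence
\begin{equation*}
\Gamma_t \bar Y_t \;=\; \mathbb E^{\mathbb Q}\!\Bigl[\,\Gamma_T (\xi - \xi') + \int_t^T \Gamma_s\, \Delta_s\, ds \;\Big|\; \mathscr F_t\Bigr],
\end{equation*}
where the stochastic integral is a true $\mathbb Q$-martingale thanks to the BMO property of $\Gamma \bar Z \cdot \widetilde W$ and boundedness of $\Gamma$. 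Since $\xi - \xi' \geq 0$, $\Delta_s \geq 0$, and $\Gamma > 0$, the right-hand side is nonnegative, giving $\bar Y_t \geq 0$, i.e.\ $Y_t \geq Y'_t$, $\mathbb P$-a.s., as required.
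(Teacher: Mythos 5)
Your proposal is correct and follows essentially the same route as the paper: the same linearization with coefficients $a$ and $b$ satisfying $|a|\leqslant L$ and $|b|\leqslant K(1+|Z|+|Z'|)$, the same reliance on the BMO property of $Z\cdot W$ and $Z'\cdot W$ (which the paper obtains by citation rather than by the $e^{\alpha Y}$ argument) to make the stochastic exponential a uniformly integrable martingale via Kazamaki, and the same conditional-expectation representation of $\Delta Y$. The only cosmetic difference is that you factor the adjoint into a Girsanov change of measure plus the deterministic integrating factor $\exp(\int_0^t a_s\,ds)$, whereas the paper packages both into the single process $\Gamma$ solving $d\Gamma_t=\Gamma_t(a_t\,dt+b_t\,dW_t)$.
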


\begin{proof}
First, under Assumptions (A1)-(A3), since the solution $(Y,Z)$ of the BSDE \eqref{bsde} in $S_{\mathbb{F}}^{\infty}(0, T ; \mathbb{R}) \times L_{\mathbb{F}}^{2}\left(0, T ; \mathbb{R}^{d}\right)$, by Proposition 2.1 of \cite{ref23}, we have $Z \in \mathcal{H}_{B M O}^{2}[0, T]$. If the parameters $(\xi',f')$ satisfy Assumptions (A1)-(A2) and $(Y',Z') \in  S_{\mathbb{F}}^{\infty}(0, T ; \mathbb{R}) \times L_{\mathbb{F}}^{2}\left(0, T ; \mathbb{R}^{d}\right)$, by Proposition 1.1 of \cite{ref67}, we can have $Z' \in \mathcal{H}_{B M O}^{2}[0, T]$.
\ms
Next, to simply notation we restrict ourselves to case $d = 1$. Denote
$$
\begin{gathered}
\Delta Y \triangleq Y-Y', \ \ \Delta Z \triangleq  Z-Z', \ \  \Delta \xi \triangleq \xi-\xi', \ \
\Delta f \triangleq f-f'.
\end{gathered}
$$
Then, by the classical linearization argument, we have
\begin{equation}\label{LBSDE}
\begin{aligned}
\Delta Y_{t} = \Delta \xi + \int_{t}^{T}\left[a_{r} \Delta Y_{r}+b_{r} \Delta Z_{r}+\Delta f\left(r, Y'_{r}, Z'_{r}\right)\right] d r -\int_{t}^{T} \Delta Z_{r} d W_{r}, \quad  0 \leqslant t \leqslant T,
\end{aligned}
\end{equation}
where the processes $a_{r}$ and $b_{r}$ are defined by
$$
a_{r} \triangleq\left\{\begin{array} { c l }
{ \frac { f ( r , Y _ { r } , Z _ { r } ) - f ( r , Y _ { r } ^ { \prime } , Z _ { r } ) } { \Delta Y _ { r } } , } & { \text { if  } \Delta Y _ { r } \neq 0, } \\
{ 0 , } & { \text { if  } \Delta Y _ { r } = 0. }
\end{array} \quad \text {and} \quad
b _ { r } \triangleq \left\{\begin{array}{cl}
\frac{f\left(r, Y_{r}^{\prime}, Z_{r}\right)-f\left(r, Y_{r}^{\prime}, Z_{r}^{\prime}\right)}{\Delta Z_{r}}, & \text { if  } \Delta Z_{r} \neq 0, \\
0, & \text { if  } \Delta Z_{r}=0.
\end{array}\right.\right.
$$
Now since the generator $f$ satisfies Assumptions (A2)-(A3), it implies that
$$|a_r| \leqslant L \quad \text{  and  } \quad |b_r| \leqslant K(1+|Z_r|+|Z'_r|).$$
Solving the linear BSDE \eqref{LBSDE} leads to
$$
\Gamma_{t} \Delta Y_{t}=\mathbb{E}\left[\Delta \xi \Gamma_{T}+\int_{t}^{T} \Gamma_{r} \Delta  f\left(r, Y'_{r}, Z'_{r}\right) d r \mid \sF_{t}\right],
$$
where $\Gamma$ is the adjoint (positive) process of the linear BSDE \eqref{LBSDE}, defined by the following SDE
$$
d \Gamma_{t} =\Gamma_{t}\left( a_t dt + b_t dW_t\right), \quad \Gamma_{0}=1.
$$
For $a$ is a bounded process and $b \in \mathcal{H}_{B M O}^{2}[0, T]$, then the $\Gamma$ is well-defined (see \cite{ref41}).

Besides, $\Delta \xi$ and $\Delta  f\left(r, Y'_{r}, Z'_{r}\right)$ are nonnegative, then $\Delta Y$ is nonnegative. Thus \autoref{BSDEcomparison} is proved.
\end{proof}

\begin{remark}
The above proof method comes from Theorem 2.2 of \cite{ref8} and Theorem 7.3.1 of \cite{ref26}. The terminal condition $\xi$ that has bounded Malliavin derivative in Assumption (A1) can be dropped. We can also get the strict comparison theorem similar to \cite{ref8}.

\end{remark}

\ms

The following result gives the regular properties of the solutions of the BSDEs, which comes from \cite{ref35}. For the convenience of readers, we give a brief proof.

\begin{proposition}\label{BSDEbounded}
If Assumptions (A1)-(A3) hold, then the BSDE \eqref{bsde} has a unique solution $\left(Y, Z\right)$ in $S_{\mathbb{F}}^{\infty}(0, T ; \mathbb{R}) \times L_{\mathbb{F}}^{\infty}\left(0, T ; \mathbb{R}^{d}\right)$, and $\left|Y_{t}\right| \leqslant(L+1) e^{L(T-t)}-1$, $\left|Z_{t}^{i}\right| \leqslant L e^{L(T-t)}$, $d\mathbb{P} \otimes dt$-a.e. for all $i=1, \ldots, d.$
\end{proposition}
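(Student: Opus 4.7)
The plan is to deduce the $L^\infty$ bound on $Z$ via Malliavin differentiation of the BSDE, then derive the $L^\infty$ bound on $Y$ via a Girsanov reduction to a linear BSDE in $Y$ alone, after which uniqueness in $S_\dbF^\infty(0,T;\dbR) \times L_\dbF^\infty(0,T;\dbR^d)$ will follow from standard Lipschitz BSDE theory. By \autoref{BSDEmax} there exists a solution $(Y, Z) \in S_\dbF^\infty(0,T;\dbR) \times L_\dbF^2(0,T;\dbR^d)$, and the classical Kobylanski exponential transformation $e^{\gamma Y_t}$ with $\gamma = 2L$, combined with $\|Y\|_\infty < \infty$ and (A2), forces $Z \in \cH^2_{BMO}[0,T]$.

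To bound $Z$ pointwise, I would approximate $f$ by smooth generators $f^n$ preserving the bounds in (A2)--(A3) via a sup-convolution regularization, so that the approximate solutions $(Y^n, Z^n)$ are Malliavin differentiable and the Malliavin derivative $(D_\theta Y^n, D_\theta Z^n)$ satisfies a linear BSDE on $[\theta,T]$ with coefficients $a^n_s := \partial_y f^n$ bounded by $L$ (from the Lipschitz-in-$y$ part of (A2)) and $b^n_s := \partial_z f^n$ satisfying $|b^n_s| \leq K(1 + 2|Z^n_s|)$ (from (A3)), hence $b^n \in \cH^2_{BMO}$ uniformly in $n$. A Girsanov change of measure with density $\cE(\int_0^\cdot b^n\, dW)_T$ yields
\[
D_\theta Y^n_t = \dbE^{\tilde \dbP^n}\!\left[\exp\!\Big(\int_t^T a^n_r\, dr\Big) D_\theta \xi \,\Big|\, \sF_t\right],
\]
so $|D_\theta Y^n_t| \leq L e^{L(T-t)}$ by (A1). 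Passing to the limit and using the standard identification $Z^n_t = D_t Y^n_t$ produces $|Z_t^i| \leq L e^{L(T-t)}$, $d\dbP \otimes dt$-a.e., and in particular $Z \in L_\dbF^\infty(0,T;\dbR^d)$.

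With $Z$ bounded, I would linearize the remaining $z$-dependence of $f$ by setting
\[
\eta_s := \frac{f(s, Y_s, Z_s) - f(s, Y_s, 0)}{|Z_s|^2}\,Z_s, \qquad \eta_s := 0 \text{ if } Z_s = 0,
\]
which, by (A3), satisfies $|\eta_s| \leq K(1 + |Z_s|) \leq K(1 + L e^{LT})$. Under the Girsanov measure $\hat \dbP$ induced by the bounded drift $-\int_0^\cdot \eta_r\, dr$, equation \eqref{bsde} becomes $Y_t = \dbE^{\hat \dbP}[\xi + \int_t^T f(s, Y_s, 0)\, ds \mid \sF_t]$. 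Combining $|\xi| \leq L$ with $|f(s,y,0)| \leq L(1+|y|)$ from (A2), the essential supremum $y(t) := \|Y_t\|_\infty$ satisfies
\[
y(t) \leq L + L(T-t) + L\int_t^T y(s)\, ds,
\]
and a backward Grönwall argument then delivers $y(t) \leq (L+1)e^{L(T-t)} - 1$.

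For uniqueness: any two solutions $(Y, Z), (Y', Z') \in S_\dbF^\infty \times L_\dbF^\infty$ automatically satisfy the pointwise bounds above, and their difference obeys a linear BSDE of the form \eqref{LBSDE} with $|a| \leq L$ and $|b| \leq K(1 + \|Z\|_\infty + \|Z'\|_\infty) < \infty$; the usual $L^2$ estimates for linear BSDEs with bounded coefficients then force $(Y, Z) \equiv (Y', Z')$. The main obstacle in this plan is the clean justification of the Malliavin step: since $f$ is only continuous (not differentiable) in $z$, $(Y, Z)$ is not a priori Malliavin differentiable, so one must build it through the $f^n$ regularization and carry the uniform bound on $a^n$ together with the uniform $\cH^2_{BMO}$ bound on $b^n$ through the limit—this is the technical passage for which one leans on the machinery developed in \cite{ref35}.
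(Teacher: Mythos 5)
Your proposal is correct in outline, but it takes a much longer road than the paper does: the paper's entire proof consists of observing that (A3) can be rewritten as $|f(t,y,z)-f(t,y,z')|\leqslant \rho(|z|\vee|z'|)|z-z'|$ with the nondecreasing function $\rho(x)=K(1+2x)$, and then invoking Corollary 2.8 of \cite{ref35} verbatim, which already delivers existence, uniqueness in $S_{\mathbb{F}}^{\infty}\times L_{\mathbb{F}}^{\infty}$, and the two explicit bounds. What you have written is, in effect, a reconstruction of the proof of that cited corollary: regularize $f$, Malliavin-differentiate the approximating BSDEs, represent $D_\theta Y^n$ via a Girsanov change of measure driven by the $\cH^2_{BMO}$ process $b^n$ to get $|D_\theta Y^n_t|\leqslant Le^{L(T-t)}$, use $Z^n_t=D_tY^n_t$, and then bound $Y$ by a backward Gr\"onwall argument (your computation that $y(t)\leqslant L+L(T-t)+L\int_t^T y(s)\,ds$ integrates to exactly $(L+1)e^{L(T-t)}-1$ is correct, and, importantly, both bounds are independent of $K$, which is what the remark following the proposition needs). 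The honest assessment is that your "technical passage" --- Malliavin differentiability of the approximating solutions, the identification $Z^n=DY^n$, and the stability of the pointwise bound under the limit when $f$ is only locally Lipschitz in $z$ (note also that sup-convolution in $z$ alone does not give differentiability in $y$, so the smoothing must treat both variables) --- is precisely the content of \cite{ref35}, so your argument does not actually achieve independence from that reference; it just inlines its proof. Two small corrections: the uniqueness step needs only that $\|Z\|_\infty$ and $\|Z'\|_\infty$ are finite so that the linearization coefficient $b$ is bounded; it is not true a priori that an arbitrary solution in $S_{\mathbb{F}}^{\infty}\times L_{\mathbb{F}}^{\infty}$ "automatically" satisfies the specific pointwise bounds --- that follows only after uniqueness identifies it with the constructed solution.
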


\begin{proof}
Under Assumptions (A1)-(A3), we set $\rho (x) \triangleq  K\left(1+2x\right)$ which is a nondecreasing function. Since $\rho\left(|z| \vee\left|z^{\prime}\right|\right) = K\left[1+2\left(|z| \vee\left|z^{\prime}\right|\right)\right] \geqslant K(1+|z|+|z'|)$, we have
$$
\begin{aligned}
\left|f(t, y, z)-f\left(t, y, z^{\prime}\right)\right| & \leqslant K\left(1+|z|+\left|z^{\prime}\right|\right)\left|z-z^{\prime}\right| \\
& \leqslant  \rho\left(|z| \vee\left|z^{\prime}\right|\right)\left|z-z^{\prime}\right|.
\end{aligned}
$$
Then the terminal condition $\xi$ and the deterministic generator $f$ satisfy the assumptions of the corollary 2.8 of \cite{ref35}, so we can get that there exists a unique solution $\left(Y, Z\right)$ in $S_{\mathbb{F}}^{\infty}(0, T ; \mathbb{R}) \times L_{\mathbb{F}}^{\infty}\left(0, T ; \mathbb{R}^{d}\right)$ to the BSDE \eqref{bsde} and $\left|Y_{t}\right| \leqslant(L+1) e^{L(T-t)}-1$, $\left|Z_{t}^{i}\right| \leqslant L e^{L(T-t)}$, $d\mathbb{P} \otimes dt$-a.e. for all $i=1, \ldots, d.$

\end{proof}

\begin{remark}
It should be pointed out that the bounds of the solution $\left(Y, Z\right)$ are not depend on the constant $K$ of (A3). This is important for us to obtain the uniformly bounded solutions of the BSDEs with the approximation coefficients in Proposition \ref{BSDEbijin}.
\end{remark}

\ms

Now we present the regularization of the generator of BSDE \eqref{bsde} through sup-convolution in the following lemma, which is useful for us to study the quadratic BSDEs later.
\begin{lemma}\label{BSDEfbijin}
Let the function $f$ satisfies Assumption (A2), then the sequence of functions
\begin{equation}
\label{f-n}
f_{n}(t, y, z) \triangleq \sup_{v \in \mathbb{R}^{d}}\left\{f(t, y, v) - n|z-v|^{2}\right\},
\end{equation}
is well defined for $n \geqslant 2 L .$ Moreover, it satisfies
\begin{itemize}
  \item [(1)]  For any $t \in[0, T]$, $y \in \mathbb{R}$ and $z \in \mathbb{R}^{d}, \left|f_{n}(t, y, z)\right| \leqslant L\left(1+|y|+2|z|^{2}\right)$.
  \item [(2)] For any $t \in[0, T]$, $y \in \mathbb{R}$ and $z \in \mathbb{R}^{d}, f_{n}(t, y, z)$ is decreasing in $n$.
  \item [(3)] If $z_{n} \rightarrow z$ as $n \rightarrow \infty$, then $f_{n} (t, y, z_{n} ) \rightarrow f(t, y, z)$.
  \item [(4)] For any $t \in[0, T]$, $y_{1}, y_{2} \in \mathbb{R}$ and $z_{1}, z_{2} \in \mathbb{R}^{d}$ and for sufficiently large $n$,
$$
\left|f_{n} (t, y_{1}, z_{1} )-f_{n} (t, y_{2}, z_{2} )\right| \leqslant L\left|y_{1}-y_{2}\right|+n\left(1+\left|z_{1}\right|+\left|z_{2}\right|\right)\left|z_{1}-z_{2}\right|.
$$
\end{itemize}
\end{lemma}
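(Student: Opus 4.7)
The plan is to verify the four properties in sequence, with the single quantitative input being the quadratic growth of $f$ in $z$ from Assumption (A2). The workhorse is the elementary inequality $|v|^2\les 2|z|^2+2|v-z|^2$, which combined with $|f(t,y,v)|\les L(1+|y|+|v|^2)$ gives
\begin{equation*}
f(t,y,v)-n|z-v|^2\les L(1+|y|+2|z|^2)-(n-2L)|v-z|^2.
\end{equation*}
For $n\ges 2L$ the coefficient of $|v-z|^2$ is non-positive, so the right-hand side is bounded above by $L(1+|y|+2|z|^2)$ uniformly in $v$. This immediately yields both well-definedness of the supremum and the upper bound in (1); the lower bound $f_n(t,y,z)\ges f(t,y,z)\ges -L(1+|y|+|z|^2)$ is obtained by testing with $v=z$.

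Property (2) is immediate from the definition, since enlarging $n$ makes the penalty $-n|z-v|^2$ more negative at every $v\neq z$. For (3), given $z_n\to z$, choose $\e_n$-maximizers $v_n^*$ with $\e_n\to 0$. Substituting $v_n^*$ into the displayed estimate above and using $f_n(t,y,z_n)\ges f(t,y,z_n)\ges -L(1+|y|+|z_n|^2)$ gives
\begin{equation*}
(n-2L)|v_n^*-z_n|^2\les L\bigl(2+2|y|+3|z_n|^2\bigr)+\e_n.
\end{equation*}
Since $|z_n|$ is bounded, the right-hand side is bounded, so $v_n^*-z_n\to 0$ and hence $v_n^*\to z$. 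Squeezing $f(t,y,z_n)\les f_n(t,y,z_n)\les f(t,y,v_n^*)+\e_n$ with the continuity of $f$ in $z$ then delivers $f_n(t,y,z_n)\to f(t,y,z)$.

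Property (4), which I expect to be the main obstacle, splits via the triangle inequality into a $y$-bound and a $z$-bound. The $y$-bound is routine: adding and subtracting $f(t,y_2,v)$ inside the supremum together with the Lipschitz assumption in $y$ yields $|f_n(t,y_1,z)-f_n(t,y_2,z)|\les L|y_1-y_2|$. For the $z$-bound, let $v_1^*$ be an $\e$-maximizer for $f_n(t,y,z_1)$; using it as a competitor for $f_n(t,y,z_2)$ produces
\begin{equation*}
f_n(t,y,z_1)-f_n(t,y,z_2)\les n(z_2-z_1)\cd(z_1+z_2-2v_1^*)+\e\les n|z_1-z_2|\,|z_1+z_2-2v_1^*|+\e.
\end{equation*}
Invoking the estimate from (3), one has $|v_1^*-z_1|\les 1$ once $n$ exceeds a threshold depending on $L,|y|,|z_1|$, so $|v_1^*|\les 1+|z_1|$ and hence $|z_1+z_2-2v_1^*|$ is controlled by a constant multiple of $1+|z_1|+|z_2|$. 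Sending $\e\downarrow 0$ and swapping the roles of $z_1$ and $z_2$ yields the claim up to absorbable constants.
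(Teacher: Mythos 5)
Your proof is correct and follows essentially the same route as the paper's: the inequality $|v|^2\les 2|z|^2+2|z-v|^2$ for well-definedness and (1), the $\e$-maximizer bound $(n-2L)|v^*-z|^2\les C(1+|y|+|z|^2)$ plus a squeeze for (3), and the competitor/$\e$-maximizer argument with $|\sup f-\sup g|\les\sup|f-g|$ for (4). The only (harmless) deviation is that taking $|v_1^*-z_1|\les 1$ rather than $\les\tfrac12$ yields the constant $2n$ in place of $n$ in (4), which, as you note, is absorbable since the downstream bounds do not depend on that constant.
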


\begin{proof}
We consider the case of the generator $f$ independent of $t$ and $y$, i.e., $f(\cdot)=f(z)$, and the case of $f(t, y, z)$ could be proved similar without substantial difference. Let $h: \mathbb{R}^{d} \rightarrow \mathbb{R}$ be a continuous function and $|h(z)| \leqslant L\left(1+|z|^{2}\right)$, then for any $n \geqslant 2 L$, we define
$$
h_{n}(z) \triangleq \sup_{v \in \mathbb{R}^{d}}\left\{h(v) - n|z-v|^{2}\right\}.
$$
From the definition, we see that $h_{n}(z) \geqslant h(z) \geqslant -L\left(1+|z|^{2}\right)$ and
$$
\begin{aligned}
h_{n}(z) & \leqslant \sup _{v \in \mathbb{R}^{d}}\left\{ L+L|v|^{2}- n|z-v|^{2}\right\} \\
&=\sup _{v \in \mathbb{R}^{d}}\left\{ L+L|v|^{2}-2 L|z-v|^{2}-(n-2 L)|z-v|^{2}\right\} \\
& \leqslant \sup _{v \in \mathbb{R}^{d}}\left\{ L+2 L|z|^{2}-(n-2 L)|z-v|^{2}\right\} \\
&= L+2 L|z|^{2}.
\end{aligned}
$$
So $\left|h_{n}(z)\right| \leqslant L\left(1+2|z|^{2}\right)$, from which item (1) holds. Moreover, item (2) comes from the definition of $h_{n}(\cdot)$ directly.

\ms

Now we prove the item (3), and consider $z_n \to z$, then for every $n$, there exists $v_{n} \in \mathbb{R}^{d}$ such that
$$
\begin{aligned}
h\left(z_{n}\right) \leqslant h_{n}\left(z_{n}\right) & \leqslant h\left(v_{n}\right)-n\left|z_{n}-v_{n}\right|^{2}+\frac{1}{n} \\
& \leqslant L+L\left|v_{n}\right|^{2}-n\left|z_{n}-v_{n}\right|^{2}+\frac{1}{n} \\
& \leqslant L+2 L\left|z_{n}\right|^{2}-(n-2 L)\left|z_{n}-v_{n}\right|^{2}+\frac{1}{n}.
\end{aligned}
$$
Since $h\left(z_{n}\right)$ is bounded, we deduce that $\lim _{n \rightarrow \infty} \sup (n-2 L)\left|z_{n}-v_{n}\right|^{2}<\infty$. In particular, when $v_{n} \rightarrow z$, we have
$$
\lim _{n \rightarrow \infty} \sup (n-2 L)\left|z_{n}-z\right|^{2}<\infty.
$$
Moreover, we have that
$$
h\left(z_{n}\right) \leqslant h_{n}\left(z_{n}\right) \leqslant h\left(v_{n}\right)+\frac{1}{n}
$$
from which $h_{n}\left(z_{n}\right) \rightarrow h(z)$ when $z_{n} \rightarrow z$, this implies that item (3) holds.

\ms

In order to prove item (4), for any $z \in \IR^d$, we take $\epsilon>0$ and consider $v_{\epsilon} \in \mathbb{R}^{d}$ (in fact for sufficiently large $n$, we can assumed that $\left.\left|z-v_{\epsilon}\right| \leqslant \frac{1}{2}\right)$ such that
$$
\begin{aligned}
h_{n}(z) & \leqslant h\left(v_{\epsilon}\right)-n\left|z-v_{\epsilon}\right|^{2}+\epsilon \\
&= h\left(v_{\epsilon}\right)-n\left|v-v_{\epsilon}\right|^{2}+n\left|v-v_{\epsilon}\right|^{2}-n\left|z-v_{\epsilon}\right|^{2}+\epsilon \\
& \leqslant h_{n}(v)+n\left|v-v_{\epsilon}\right|^{2}-n\left|z-v_{\epsilon}\right|^{2}+\epsilon \\
& \leqslant h_{n}(v)+n|z-v|\left(\left|v-v_{\epsilon}\right| + \left|z-v_{\epsilon}\right|\right)+\epsilon \\
& \leqslant h_{n}(v)+n|z-v|(1+|z|+|v|)+\epsilon.
\end{aligned}
$$
Therefore, interchanging the role of $z$ and $v$, and since $\epsilon$ is arbitrary we can get
$$
\left|h_{n}(z)-h_{n}(v)\right| \leqslant n(1+|z|+|v|)|z-v|.
$$
Finally, for any $y_{1}, y_{2} \in \mathbb{R}, z \in \mathbb{R}^{d}$ and $t \in[0, T]$, by the inequality
$$
\left|\sup _{v \in \mathbb{R}^{d}} f(v)-\sup _{v \in \mathbb{R}^{d}} g(v)\right| \leqslant \sup _{v \in \mathbb{R}^{d}}|f(v)-g(v)|
$$
and the Lipschitz condition of $f$ with respect to the spatial variable $y$, we get that
$$
\left|f_{n}\left(t, y_{1}, z\right)-f_{n}\left(t, y_{2}, z\right)\right| \leqslant L\left|y_{1}-y_{2}\right|
$$
from which the desired result follows.
\end{proof}

\begin{remark}\label{smooth}
The idea of the proof of the above lemma is inspired by Lemma 1 in \cite{ref30}.
\end{remark}

\ms

In the following, for positive number $n \geqslant 2L$, we consider the following BSDE:

\begin{equation}\label{BSDEjieduan}
Y_t^n = \xi + \int^T_t f_n(r,Y_r^n,Z_r^n)dr - \int_t^T Z_r^n dW_r, \q~    0 \les t \les T,
\end{equation}
where $f_n(t,y,z)$ is defined by \eqref{f-n}. For the above BSDE \rf{BSDEjieduan}, we have the following properties concerning the solutions.

\begin{proposition}\label{BSDEbijin}
Under Assumptions (A1)-(A2), then for any sufficiently large $n \in \dbN$,
the BSDE \eqref{BSDEjieduan} admits a unique solution $\left(Y^{n}, Z^{n}\right) \in S_{\mathbb{F}}^{\infty}(0, T ; \mathbb{R}) \times L_{\mathbb{F}}^{\infty}\left(0, T ; \mathbb{R}^{d}\right)$, and $Y^{n}$ is decreasing in $n$.
 Moreover, $\left(Y^{n}, Z^{n}\right) \rightarrow\left(\overline{Y}, \overline{Z}\right)$ in the space $S_{\mathbb{F}}^{2}(0, T ; \mathbb{R}) \times L_{\mathbb{F}}^{2}\left(0, T ; \mathbb{R}^{d}\right)$ as $n \rightarrow \infty$, where $(\overline{Y}, \overline{Z})$ in $S_{\mathbb{F}}^{\infty}(0, T ; \mathbb{R}) \times L_{\mathbb{F}}^{\infty}\left(0, T ; \mathbb{R}^{d}\right)$ is the maximal solution of BSDE \eqref{bsde}.
\end{proposition}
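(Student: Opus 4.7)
The plan is to reduce each approximating BSDE to the setting of Proposition \ref{BSDEbounded}, extract bounds on $(Y^n,Z^n)$ that are uniform in $n$, use the comparison principle together with Lemma \ref{BSDEfbijin}(2) to obtain monotonicity of $Y^n$, and finally pass to the limit via the quadratic monotone stability argument of Kobylanski \cite{ref25} to identify the limit as the maximal solution of \eqref{bsde}. For $n\geq 2L$, Lemma \ref{BSDEfbijin} guarantees that $f_n$ is well defined and satisfies $|f_n(t,y,z)|\leq L(1+|y|+2|z|^2)\leq 2L(1+|y|+|z|^2)$; item (4) with $z_1=z_2$ yields the Lipschitz condition in $y$ with constant $L$, and with $y_1=y_2$ yields $|f_n(t,y,z)-f_n(t,y,z')|\leq n(1+|z|+|z'|)|z-z'|$, which is Assumption (A3) with $K=n$. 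Proposition \ref{BSDEbounded} then provides a unique $(Y^n,Z^n)\in S^\infty_\dbF(0,T;\dbR)\times L^\infty_\dbF(0,T;\dbR^d)$ to \rf{BSDEjieduan}, and, by the remark following that proposition, the resulting bounds $|Y^n_t|\leq (2L+1)e^{2L(T-t)}-1$ and $|Z^{n,i}_t|\leq 2Le^{2L(T-t)}$ are independent of $n$.

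For monotonicity, fix $m\geq n$. Lemma \ref{BSDEfbijin}(2) gives $f_n(t,Y^m_t,Z^m_t)\geq f_m(t,Y^m_t,Z^m_t)$, $d\IP\otimes dt$-a.e.; since $(\xi,f_n)$ satisfies (A1)-(A3) and $(\xi,f_m)$ satisfies (A1)-(A2), Proposition \ref{BSDEcomparison} produces $Y^n_t\geq Y^m_t$, $\IP$-a.s. Hence $(Y^n)$ is a decreasing sequence of processes, uniformly bounded in $S^\infty_\dbF$, and by dominated convergence converges $\IP$-a.s. and in $S^2_\dbF$ to some $\overline{Y}\in S^\infty_\dbF$.

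To upgrade the convergence of $Y^n$ to convergence of $(Y^n,Z^n)$ in $S^2_\dbF\times L^2_\dbF$, I would apply It\^o's formula to $\Psi(Y^n-Y^m)$ with $\Psi(x)=\alpha^{-2}(e^{\alpha|x|}-1-\alpha|x|)$ for $\alpha$ large enough, following Kobylanski's exponential transformation: the convex weight $\Psi''$ precisely absorbs the contributions of $|Z^n|^2$ and $|Z^m|^2$ coming from the quadratic growth of $f_n$ and $f_m$, while the uniform $L^\infty$ estimate from Step 1 (which entails a uniform $\cH^2_{BMO}$ bound on $Z^n$) turns the resulting inequality into a Cauchy estimate for $(Z^n)$ in $L^2_\dbF$. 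Letting $n\to\infty$ in \rf{BSDEjieduan}, using Lemma \ref{BSDEfbijin}(3) together with dominated convergence, shows that the limit $(\overline{Y},\overline{Z})$ solves \eqref{bsde}. For maximality, any solution $(Y,Z)\in S^\infty_\dbF\times L^2_\dbF$ of \eqref{bsde} satisfies $f_n(t,Y_t,Z_t)\geq f(t,Y_t,Z_t)$ by Lemma \ref{BSDEfbijin}(2), so Proposition \ref{BSDEcomparison} gives $Y^n_t\geq Y_t$ for every $n$; passing to the limit yields $\overline{Y}_t\geq Y_t$, so $(\overline{Y},\overline{Z})$ is the maximal solution. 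The main obstacle is precisely the Cauchy estimate for $(Z^n)$: a direct application of It\^o's formula to $|Y^n-Y^m|^2$ cannot control the quadratic terms in $z$, so the exponential transformation is genuinely required, and it is only viable because the bounds of Step 1 do not depend on the constant $K=n$ from Assumption (A3).
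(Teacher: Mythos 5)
Your proposal is correct and follows essentially the same route as the paper: reduce \rf{BSDEjieduan} to Proposition \ref{BSDEbounded} via Lemma \ref{BSDEfbijin} to get unique solutions with bounds uniform in $n$ (since they do not depend on $K=n$), use the comparison principle and the monotonicity of $f_n$ for the decrease of $Y^n$, invoke Kobylanski's monotone stability to pass to the limit, and conclude maximality from $f_n\geq f$ and comparison. The only difference is that you unpack the monotone stability step (the exponential transformation) rather than citing it as a black box, and you track the growth constant as $2L$ rather than $L$, which is if anything slightly more careful than the paper.
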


\begin{proof}
Under Assumptions (A1)-(A2), by \autoref{BSDEfbijin}, we know that for any sufficiently large  $n \in \dbN $, the function $f_n(r,y,z)$ and terminal condition $\xi$ satisfy the conditions of \autoref{BSDEbounded}, then the BSDE \eqref{BSDEjieduan} has a unique solution $(Y^{n},Z^{n})$ and
$$
\begin{gathered}
\left|Y_{t}^n \right| \leqslant(L+1) e^{L(T-t)}-1, \text { for all } t \in[0, T], \ \
\left|Z_{t}^{n,i}\right| \leqslant L e^{L(T-t)}, \ \  d\mathbb{P} \otimes dt   \text {-a.e. for all } i=1, \ldots, d.
\end{gathered}
$$
Note that the sequence $\{ f_n(r,y,z) \}_{n}$ is decreasing, so from the comparison principle (see Theorem 7.3.1 of \cite{ref26}), we have $Y^{n} \geqslant Y^{n+1}$, $\mathbb{P}$-a.s..

\ms

Let us recapitulate what we have obtained. The sequence $\{ f_n(t,y,z) \}_{n}$ converges locally uniformly in $(y,z)$ to the generator $f(t,y,z)$, and $\left|f_{n}(t, y, z)\right| \leqslant L\left(1+|y|+2|z|^{2}\right)$. The BSDE \eqref{BSDEjieduan} with parameters $(f_{n},\xi) $ has a unique bounded solution $(Y^{n},Z^{n})$ such that the sequence $\{ Y^n \}_{n}$ is decreasing. Therefore, applying the monotone stability proposition (see Proposition 2.4 of \cite{ref25} or Lemma 3 of \cite{ref22}), we have $(Y^{n},Z^{n})\rightarrow (\overline{Y}, \overline{Z})$ in the space $S_\dbF^2(0,T;\dbR) \times L_\dbF^2(0,T;\dbR^d)$ as $n\rightarrow \infty$, and $(\overline{Y}, \overline{Z})$ is a solution of the BSDE \eqref{bsde} with parameters $(f,\xi) $. The solution $(\overline{Y}, \overline{Z})$ is the maximal solution of the BSDE \eqref{bsde} was followed by $f_{n}(t, y, z)  \geqslant f(t, y, z) $ and the comparison principle (see \autoref{BSDEcomparison}). Finally, since the bounds of the solution $(Y^{n},Z^{n})$ does not depend on $n$, we have $(\overline{Y}, \overline{Z}) \in  S_\dbF^\i(0,T;\dbR) \times L_\dbF^\infty(0,T;\dbR^d)$, and $\left|\overline{Y}_{t}\right| \leqslant(L+1) e^{L(T-t)}-1$, $\left|\overline{Z}_{t}^{i}\right| \leqslant L e^{L(T-t)}$, $d\mathbb{P} \otimes dt$-a.e. for all $i=1, \ldots, d.$
\end{proof}

\ms

Furthermore, to study the minimal solution of BSDE \eqref{bsde}, we apply the similar above arguments. Firstly, we give the following lemma involving the regularization of the generator $f$ by inf-convolution techniques. The proof of \autoref{BSDEfbijin1} is similar to \autoref{BSDEfbijin}, so we omit the proof.

\begin{lemma}\label{BSDEfbijin1}
Let the function $f$ satisfies Assumption (A2), then the sequence of functions
\begin{equation}
\label{f-n1}
\hat{f}_{n}(t, y, z) \triangleq \inf _{v \in \mathbb{R}^{d}}\left\{f(t, y, v)+n|z-v|^{2}\right\},
\end{equation}
is well defined for $n \geqslant 2 L .$ Moreover, it satisfies
\begin{itemize}
  \item [(1)]  For any $t \in[0, T]$, $y \in \mathbb{R}$ and $z \in \mathbb{R}^{d}, \left|\hat{f}_{n}(t, y, z)\right| \leqslant L\left(1+|y|+2|z|^{2}\right)$.
  \item [(2)] For any $t \in[0, T]$, $y \in \mathbb{R}$ and $z \in \mathbb{R}^{d}, \hat{f}_{n}(t, y, z)$ is increasing in $n$.
  \item [(3)] If $z_{n} \rightarrow z$ as $n \rightarrow \infty$, then $\hat{f}_{n} (t, y, z_{n} ) \rightarrow f(t, y, z)$.
  \item [(4)] For any $t \in[0, T]$, $y_{1}, y_{2} \in \mathbb{R}$ and $z_{1}, z_{2} \in \mathbb{R}^{d}$ and for sufficiently large $n$,
$$
\left|\hat{f}_{n} (t, y_{1}, z_{1} )-\hat{f}_{n} (t, y_{2}, z_{2} )\right| \leqslant L\left|y_{1}-y_{2}\right|+n\left(1+\left|z_{1}\right|+\left|z_{2}\right|\right)\left|z_{1}-z_{2}\right|.
$$
\end{itemize}
\end{lemma}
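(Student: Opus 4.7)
The plan is to mirror the proof of \autoref{BSDEfbijin} via the duality between sup- and inf-convolution. The key identity is
$$\hat{f}_n(t,y,z) = -\sup_{v \in \mathbb{R}^d}\bigl\{-f(t,y,v) - n|z-v|^2\bigr\} = -(-f)_n(t,y,z),$$
where $(-f)_n$ denotes the sup-convolution from \eqref{f-n} applied to $-f$. Since $|-f| = |f|$ and the Lipschitz constant of $f$ in $y$ is invariant under negation, $-f$ satisfies Assumption (A2) with the same constant $L$, so \autoref{BSDEfbijin} applies to $-f$. Items (1) and (4) transfer verbatim because they involve only $|\hat{f}_n|$ and differences $|\hat{f}_n(\cdot) - \hat{f}_n(\cdot')|$; item (2) flips direction because $(-f)_n$ being decreasing in $n$ forces $-(-f)_n$ to be increasing in $n$; item (3) follows from $(-f)_n(t,y,z_n) \to -f(t,y,z)$ combined with an outer minus sign.

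Alternatively, and closer to what the author likely means by ``similar'', one repeats the proof of \autoref{BSDEfbijin} line by line, replacing every $\sup_v$ by $\inf_v$ and every $-n|z-v|^2$ by $+n|z-v|^2$. The upper bound in (1) becomes trivial: $\hat{h}_n(z) \leq h(z) \leq L(1+|z|^2)$. The lower bound
$$\hat{h}_n(z) \geq \inf_{v \in \mathbb{R}^d}\bigl\{-L-L|v|^2 + n|z-v|^2\bigr\} \geq -L - 2L|z|^2$$
is obtained by exactly the same $n = 2L + (n-2L)$ splitting, together with $L|v|^2 \leq 2L|z|^2 + 2L|z-v|^2$. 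Monotonicity (2) is immediate from the definition; for (3) one selects a near-infimizer $v_n$ with $\hat{h}_n(z_n) \geq h(v_n) + n|z_n - v_n|^2 - 1/n$ and couples with $\hat{h}_n(z_n) \leq h(z_n)$ to trap $(n-2L)|z_n - v_n|^2$ uniformly, forcing $v_n \to z$, after which the continuity of $h$ closes the argument.

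The step requiring the most care is (4), and it is the only place I would slow down. Fix $\epsilon > 0$ and choose an $\epsilon$-infimizer $v_\epsilon$ for $\hat{h}_n(z)$; for $n$ sufficiently large the quadratic penalty $n|z-v_\epsilon|^2$ confines $v_\epsilon$ to within distance $1/2$ of $z$ (the same ``quadratic trap'' as in \autoref{BSDEfbijin}). From the definition $\hat{h}_n(v) \leq h(v_\epsilon) + n|v - v_\epsilon|^2$ and the factorization
$$|v-v_\epsilon|^2 - |z-v_\epsilon|^2 \leq |z-v|\bigl(|v-v_\epsilon| + |z-v_\epsilon|\bigr) \leq |z-v|(1 + |z| + |v|),$$
one obtains $\hat{h}_n(v) - \hat{h}_n(z) \leq n(1+|z|+|v|)|z-v| + \epsilon$; symmetrizing in $z \leftrightarrow v$ and sending $\epsilon \to 0$ yields the Lipschitz-type estimate in $z$. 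The Lipschitz-in-$y$ part is then a one-line consequence of $|\inf_v f(v) - \inf_v g(v)| \leq \sup_v |f(v) - g(v)|$ applied with $f(v) = f(t,y_1,v) + n|z-v|^2$ and $g(v) = f(t,y_2,v) + n|z-v|^2$.
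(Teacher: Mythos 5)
Your proposal is correct. The paper omits this proof entirely, stating only that it is ``similar to'' the sup-convolution lemma, and your second, line-by-line adaptation is exactly that intended argument: all the estimates (the $n = 2L + (n-2L)$ splitting for the lower bound in (1), the near-infimizer trap for (3), and the $\epsilon$-infimizer plus factorization $|v-v_\epsilon|^2 - |z-v_\epsilon|^2 \leq |z-v|(|v-v_\epsilon|+|z-v_\epsilon|)$ for (4)) carry over with the signs flipped, as you verify. Your first argument, the reduction $\hat{f}_n(t,y,z) = -(-f)_n(t,y,z)$, is a genuinely cleaner route that the paper does not take: since $-f$ satisfies (A2) with the same constant $L$ (the growth bound and the Lipschitz constant in $y$ are invariant under negation), all four items follow formally from the already-proved lemma, with only the monotonicity in (2) reversing direction. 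The duality reduction buys a two-line proof with no risk of sign errors in the estimates; the direct adaptation buys nothing extra here but is what the authors evidently had in mind. Either version is a complete and correct proof.
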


\ms

In the following, for positive number $n \geqslant 2L$, we consider the following BSDE:

\begin{equation}\label{BSDEjieduan1}
Y_t^n = \xi + \int^T_t \hat{f}_n(r,Y_r^n,Z_r^n)dr - \int_t^T Z_r^n dW_r, \q~    0 \les t \les T,
\end{equation}
where $\hat{f}_n(t,y,z)$ is defined by \eqref{f-n1}. For the above BSDE \rf{BSDEjieduan1},  by the similar arguments in \autoref{BSDEbijin}, we have the following proposition.

\begin{proposition}\label{BSDEbijin1}
Under Assumptions (A1)-(A2), then for any sufficiently large $n \in \dbN$,
the BSDE \eqref{BSDEjieduan1} admits a unique solution $\left(Y^{n}, Z^{n}\right) \in S_{\mathbb{F}}^{\infty}(0, T ; \mathbb{R}) \times L_{\mathbb{F}}^{\infty}\left(0, T ; \mathbb{R}^{d}\right)$, and $Y^{n}$ is increasing in $n$.
 Moreover, $\left(Y^{n}, Z^{n}\right) \rightarrow \left(\underline{Y}, \underline{Z}\right)$ in the space $S_{\mathbb{F}}^{2}(0, T ; \mathbb{R}) \times L_{\mathbb{F}}^{2}\left(0, T ; \mathbb{R}^{d}\right)$ as $n \rightarrow \infty$, where $(\underline{Y}, \underline{Z})$ in $S_{\mathbb{F}}^{\infty}(0, T ; \mathbb{R}) \times L_{\mathbb{F}}^{\infty}\left(0, T ; \mathbb{R}^{d}\right)$ is the minimal solution of BSDE \eqref{bsde}.
\end{proposition}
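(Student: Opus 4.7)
The plan is to follow the template of Proposition \ref{BSDEbijin}, but with every inequality and monotonicity direction flipped, since $\hat f_n$ approximates $f$ from below rather than from above. First I would invoke \autoref{BSDEfbijin1}: items (1) and (4) show that, for $n \geqslant 2L$, the generator $\hat f_n$ satisfies Assumptions (A2)--(A3) with the same constant $L$ (for the $y$-variable) and with $K = n$ (for the $z$-variable). Combined with (A1) on $\xi$, \autoref{BSDEbounded} then yields a unique solution $(Y^n, Z^n) \in S_{\mathbb F}^{\infty}(0,T;\mathbb R) \times L_{\mathbb F}^{\infty}(0,T;\mathbb R^d)$ of \eqref{BSDEjieduan1}, and crucially the bounds
$$
|Y^n_t| \leqslant (L+1)e^{L(T-t)}-1, \qquad |Z^{n,i}_t| \leqslant L e^{L(T-t)},
$$
are independent of $n$, since they depend only on the $L$-constant of (A2) and not on the $K$-constant of (A3) (as emphasized in the remark after \autoref{BSDEbounded}).

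Next I would establish monotonicity. By item (2) of \autoref{BSDEfbijin1}, the sequence $\{\hat f_n\}_n$ is increasing in $n$, so $\hat f_n(t,Y^{n+1}_t,Z^{n+1}_t) \leqslant \hat f_{n+1}(t,Y^{n+1}_t,Z^{n+1}_t)$ pointwise. Applying the comparison principle \autoref{BSDEcomparison} to the two BSDEs driven by $(\xi,\hat f_n)$ and $(\xi,\hat f_{n+1})$ (both of which satisfy Assumptions (A1)--(A3) for sufficiently large $n$), we obtain $Y^n \leqslant Y^{n+1}$, that is, $Y^n$ is increasing in $n$.

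With the uniform bounds on $(Y^n,Z^n)$, the quadratic growth control $|\hat f_n(t,y,z)| \leqslant L(1+|y|+2|z|^2)$ from item (1), the monotonicity just obtained, and the locally uniform convergence $\hat f_n(t,y,z_n)\to f(t,y,z)$ for $z_n\to z$ from item (3), all hypotheses of the monotone stability result (Proposition 2.4 of \cite{ref25} or Lemma 3 of \cite{ref22}) are satisfied. It delivers a limit pair $(\underline Y,\underline Z)$ with $(Y^n,Z^n)\to(\underline Y,\underline Z)$ in $S_{\mathbb F}^{2}\times L_{\mathbb F}^{2}$ which solves \eqref{bsde}; and passing the uniform bounds to the limit places it in $S_{\mathbb F}^{\infty}\times L_{\mathbb F}^{\infty}$ with the same explicit constants.

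Finally, to identify $(\underline Y,\underline Z)$ as the \emph{minimal} solution, I would take any solution $(Y,Z)\in S_{\mathbb F}^{\infty}\times L_{\mathbb F}^{2}$ of \eqref{bsde} furnished by \autoref{BSDEmax}. Since $\hat f_n(t,y,z) \leqslant f(t,y,z)$ by construction, the comparison principle \autoref{BSDEcomparison} applied to the BSDEs with parameters $(\xi,\hat f_n)$ and $(\xi,f)$ yields $Y^n \leqslant Y$ for every $n$; letting $n\to\infty$ gives $\underline Y \leqslant Y$. The only subtle point, and probably the main obstacle, is ensuring that the comparison principle is legitimately applicable in each step: \autoref{BSDEcomparison} requires \emph{one} of the two generators to satisfy (A3), and here it is the approximating $\hat f_n$ (with constant $K=n$) that plays that role, while the solution of $(\xi,f)$ only needs to be an element of $S_{\mathbb F}^{\infty}\times L_{\mathbb F}^{2}$, which is automatic. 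Once this is checked the proof mirrors \autoref{BSDEbijin} line by line, with signs reversed.
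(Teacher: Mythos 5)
Your proposal is correct and is essentially the argument the paper intends: the paper omits this proof entirely, deferring to ``the similar arguments in \autoref{BSDEbijin}'', and your sign-reversed reconstruction (uniform bounds from \autoref{BSDEbounded} independent of $K=n$, monotonicity from item (2) of \autoref{BSDEfbijin1} plus comparison, monotone stability, then minimality from $\hat f_n \leqslant f$) is exactly that argument. You also correctly flag the only real subtlety, namely that \autoref{BSDEcomparison} as stated puts the (A3)-generator on the dominating side, so the minimality step needs the symmetric version of that proposition (which follows from the same linearization with $\Delta f \leqslant 0$).
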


\ms

Now, we give the main result in this section.

\begin{theorem}
\label{main}
Under Assumptions (A1)-(A2), the BSDE \eqref{bsde} has a unique adapted solution $(Y,Z)$ in $S_{\mathbb{F}}^{\infty}(0, T ; \mathbb{R}) \times L_{\mathbb{F}}^{\infty}\left(0, T ; \mathbb{R}^{d}\right)$.
\end{theorem}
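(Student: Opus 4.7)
The approach is to show any solution $(Y,Z)\in S^\infty_{\mathbb{F}}(0,T;\mathbb{R})\times L^\infty_{\mathbb{F}}(0,T;\mathbb{R}^d)$ of \eqref{bsde} must coincide with the maximal solution $(\overline Y,\overline Z)$ by proving $(\overline Y,\overline Z)=(\underline Y,\underline Z)$ and invoking the sandwich $\underline Y\le Y\le\overline Y$ from Proposition \ref{BSDEmax}.

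\textbf{Step 1 (extremal solutions with bounded $Z$).} I would first invoke Propositions \ref{BSDEbijin} and \ref{BSDEbijin1} to obtain the maximal and minimal solutions $(\overline Y,\overline Z)$ and $(\underline Y,\underline Z)$ in $S^\infty_{\mathbb{F}}(0,T;\mathbb{R})\times L^\infty_{\mathbb{F}}(0,T;\mathbb{R}^d)$. The decisive feature, carried over from Proposition \ref{BSDEbounded}, is that the bound $|Z^{n,i}_t|\le Le^{L(T-t)}$ for each of the approximating BSDEs is \emph{independent} of the Lipschitz constant appearing in (A3). Hence the bound survives the limit, and with $M\triangleq Le^{LT}$ one has $|\overline Z_t|\vee|\underline Z_t|\le M$, $d\mathbb{P}\otimes dt$-a.e.

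\textbf{Step 2 (localization).} Using $M$ from Step 1, construct a modified generator $\bar f$ that (i) agrees with $f$ on $[0,T]\times\Omega\times\mathbb{R}\times\bar{B}(0,M)$ and (ii) satisfies (A2) and (A3) on all of $[0,T]\times\Omega\times\mathbb{R}\times\mathbb{R}^d$. By Proposition \ref{BSDEbounded}, the BSDE with parameters $(\bar f,\xi)$ then admits a unique solution in $S^\infty_{\mathbb{F}}(0,T;\mathbb{R})\times L^\infty_{\mathbb{F}}(0,T;\mathbb{R}^d)$.

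\textbf{Step 3 (collapse).} Since $|\overline Z_t|,|\underline Z_t|\le M$ along the trajectories, $\bar f(r,\overline Y_r,\overline Z_r)=f(r,\overline Y_r,\overline Z_r)$ and similarly for $(\underline Y,\underline Z)$, so both extremal pairs solve the $\bar f$-BSDE; uniqueness from Step 2 forces $(\overline Y,\overline Z)=(\underline Y,\underline Z)$. Combined with the sandwich $\underline Y\le Y\le\overline Y$ for any $(Y,Z)\in S^\infty_{\mathbb{F}}(0,T;\mathbb{R})\times L^\infty_{\mathbb{F}}(0,T;\mathbb{R}^d)$, one concludes $Y=\overline Y=\underline Y$, giving the required uniqueness.

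The main obstacle is the construction of $\bar f$ in Step 2. Because $f$ is only continuous in $z$, a plain radial truncation $f(t,y,\pi_M(z))$ inherits merely continuity and does not satisfy (A3); the modification must be engineered so that the estimate $|\bar f(t,y,z)-\bar f(t,y,z')|\le K(1+|z|+|z'|)|z-z'|$ holds globally while preserving $\bar f=f$ on $\bar{B}(0,M)$. This is where the Malliavin-boundedness hypothesis (A1) pays off: it is precisely (A1) that, through Propositions \ref{BSDEbijin} and \ref{BSDEbijin1}, delivers the uniform $Z$-bound $M$ used in this localization step, thereby confining the generator's quadratic behaviour to a region where a Lipschitz surrogate is meaningful.
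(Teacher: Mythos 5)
Your Steps 1 and 3 match the paper's argument exactly: obtain the maximal and minimal solutions with $\overline Z,\underline Z$ bounded by $M$ (the bound being independent of the constant $K$ in (A3)), localize the generator on $\bar B(0,M)$, and conclude $(\overline Y,\overline Z)=(\underline Y,\underline Z)$. The problem is Step 2. You correctly flag as ``the main obstacle'' that the radial truncation is only continuous in $z$ and that you would need a modification $\bar f$ agreeing with $f$ on $\bar B(0,M)$ while satisfying (A3) globally --- but such a modification does not exist. If $\bar f=f$ on $\bar B(0,M)$ and $|\bar f(t,y,z)-\bar f(t,y,z')|\leqslant K(1+|z|+|z'|)|z-z'|$ held everywhere, then restricting to $z,z'\in\bar B(0,M)$ would give $|f(t,y,z)-f(t,y,z')|\leqslant K(1+2M)|z-z'|$ there, i.e.\ $f$ would be Lipschitz in $z$ on the ball; under (A2) $f$ is only continuous in $z$, so no amount of engineering outside the ball can rescue this. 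Consequently you cannot route the uniqueness of the localized BSDE through Proposition \ref{BSDEbounded}, and the proof as written has a genuine gap.

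The paper closes the argument differently: it takes precisely the plain radial truncation $\bar f(t,y,z)=f(t,y,Mz/|z|)$ for $|z|>M$, observes that this generator is Lipschitz in $y$ and \emph{uniformly continuous} in $z$ (continuity of $f$ on the compact ball composed with the Lipschitz radial projection), with growth now trivially linear since $\bar f$ is bounded in $z$, and then invokes Theorem 2 of \cite{ref55} (Fan--Jiang), which gives uniqueness for generators Lipschitz in $y$ and uniformly continuous in $z$ without any Lipschitz hypothesis in $z$. Replacing your Step 2 by this appeal to \cite{ref55} repairs the argument; everything else in your proposal is consistent with the paper's proof.
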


\begin{proof}

First of all, by \autoref{BSDEbijin} and \autoref{BSDEbijin1}, the BSDE \eqref{bsde} has the maximal and minimal solution $\left(\overline{Y}, \overline{Z}\right)$ and $\left(\underline{Y}, \underline{Z}\right)$ in $S_{\mathbb{F}}^{\infty}(0, T ; \mathbb{R}) \times L_{\mathbb{F}}^{\infty}\left(0, T ; \mathbb{R}^{d}\right)$, respectively. The bound of $\overline{Z}$ and $\underline{Z} $ is denoted by the constant $M$.

\ms

Next, we define
$$
\bar{f}(t, y, z) \triangleq  \begin{cases}f(t, y, z), & \text { if }|z| \leqslant M; \\ f(t, y, \frac{M z}{|z|}), & \text { if }|z| > M.\end{cases}
$$
Then under Assumption (A2), the generator $\bar{f}(t,y,z)$ is Lipschitz continuous in $y$ and uniformly continuous in $z$. Furthermore, by Theorem 2 of \cite{ref55}, the BSDE with parameters $(\bar{f},\xi)$ has a unique solution. Now we can prove that the maximal solution $\left(\overline{Y}, \overline{Z}\right)$ of the BSDE \eqref{bsde} must be equal to the minimal solution $\left(\underline{Y}, \underline{Z}\right)$, since solving the BSDE \eqref{bsde} with the generator $\bar{f}$ that coincides with the generator $f$ for $|z| \leqslant M$, where $M$ is the bound on $\overline{Z}$ and $\underline{Z}$.

\end{proof}

\begin{remark}
In the above theorem, we see that if we assume that the terminal condition $\xi$ has bounded Malliavin derivative, even though relaxing some assumptions on the generator $f$, we can also get the uniqueness result for the BSDE.
\end{remark}

\begin{remark}
For the unbounded terminal condition case, similar results will be in our coming paper. In fact, we only need the comparison theorem (see \autoref{BSDEcomparison}) holds for the solution $(Y',Z')$ in the appropriate space.
\end{remark}

\section{The uniqueness solution to the BSDEs with linear growth}

In this section we state and prove a uniqueness result for the BSDE \eqref{bsde}, whose generator $f(t,y,z)$ is Lipschitz in $y$ and only continuous with linear growth in $z$. The following theorem is the main result of this section.

\begin{theorem}
\label{main1}
Assume that the following conditions hold:
\begin{itemize}
    \item [(B1)] The terminal condition $\xi$ is in $\mathbb{D}^{1,2}$ and there exists a positive constant $L $ such that $\left|D_{t}^{i} \xi\right| \leqslant L$, $d \mathbb{P} \otimes d t  $-a.e., for all $i=1, \ldots, d $;
    \item [(B2)] The generator $f$ is Borel measurable and continuous with respect to $y,z$, and there exists a positive constant $L$ such that for all $t\in[0,T]$ $y,y'\in\dbR$ and $z\in\dbR^d$
   \begin{align*}
  \ds |f(t,y,z)|  \les L(1+|y|+|z|) \text{ and } |f(t,y,z)-f(t,y',z)|  \les L|y-y'|.
   \end{align*}

\end{itemize}
Then the BSDE \eqref{bsde} has a unique adapted solution $(Y,Z)$ in $S_{\mathbb{F}}^{2}(0, T ; \mathbb{R}) \times L_{\mathbb{F}}^{\infty}\left(0, T ; \mathbb{R}^{d}\right)$.
\end{theorem}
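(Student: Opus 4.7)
The plan is to mirror the strategy used for \autoref{main}, adapting the sup/inf-convolution approximation to the linear-growth setting. First, for $n\geqslant L+1$, I define
\[
f_n(t,y,z)\triangleq\sup_{v\in\mathbb{R}^d}\{f(t,y,v)-n|z-v|\},\qquad \hat f_n(t,y,z)\triangleq\inf_{v\in\mathbb{R}^d}\{f(t,y,v)+n|z-v|\},
\]
and verify the linear-growth analogue of \autoref{BSDEfbijin}: the linear penalty (in place of the quadratic one) still makes $f_n,\hat f_n$ well defined, Lipschitz in $y$ with constant $L$, Lipschitz in $z$ with constant $n$, of linear growth $|f_n|,|\hat f_n|\leqslant L(1+|y|+|z|)$, with $f_n\downarrow f$ and $\hat f_n\uparrow f$ locally uniformly. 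By Pardoux--Peng, the BSDEs with parameters $(\xi,f_n)$ and $(\xi,\hat f_n)$ admit unique solutions $(Y^n,Z^n)$ and $(\hat Y^n,\hat Z^n)$ in $S_{\mathbb{F}}^2(0,T;\mathbb{R})\times L_{\mathbb{F}}^2(0,T;\mathbb{R}^d)$, and the comparison principle gives monotonicity in $n$.

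The crucial step is a bound on $Z^n$ (and $\hat Z^n$) uniform in $n$, despite the $n$-Lipschitz constant in $z$. Because $\xi\in\mathbb{D}^{1,2}$ with $\|D\xi\|_\infty\leqslant L$ and $f_n$ is Lipschitz in $(y,z)$, standard Malliavin-calculus results for BSDEs (after first mollifying $f_n$ into a smooth $f_{n,k}$ and then letting $k\to\infty$) give $(Y^n,Z^n)\in\mathbb{D}^{1,2}$ with $Z^n_t=D_tY^n_t$ a.e., where $D_\theta Y^n$ solves a linear BSDE whose drift is bounded by $L$ in the $y$-slot and by $n$ in the $z$-slot. Applying Girsanov to absorb the $z$-drift and taking conditional expectation leaves only the bounded $y$-coefficient in the exponential factor, yielding $|D_\theta Y^n_t|\leqslant\|D\xi\|_\infty\,e^{L(T-t)}$, hence $|Z^n_t|\leqslant Le^{LT}$ $d\mathbb{P}\otimes dt$-a.e., independently of $n$. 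The same estimate holds for $\hat Z^n$.

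With this uniform bound in hand, the monotone stability argument developed for \autoref{BSDEbijin}--\autoref{BSDEbijin1} goes through almost verbatim and produces the maximal and minimal solutions $(\overline Y,\overline Z),(\underline Y,\underline Z)\in S_{\mathbb{F}}^2(0,T;\mathbb{R})\times L_{\mathbb{F}}^\infty(0,T;\mathbb{R}^d)$ of \eqref{bsde}, both with $|\overline Z|,|\underline Z|\leqslant M\triangleq Le^{LT}$. Mirroring the localization in the proof of \autoref{main}, set $\bar f(t,y,z)=f(t,y,z)$ for $|z|\leqslant M$ and $\bar f(t,y,z)=f(t,y,Mz/|z|)$ otherwise. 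Then $\bar f$ is Lipschitz in $y$, uniformly continuous in $z$ (the radial projection is $1$-Lipschitz and $f(t,y,\cdot)$ is continuous on the closed ball of radius $M$), and still of linear growth, so Theorem~2 of \cite{ref55} gives a unique solution to the BSDE with parameters $(\xi,\bar f)$. Since the $Z$-components of both extremal solutions stay within this ball, they both solve this BSDE and therefore coincide, which yields the desired uniqueness.

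The main obstacle I anticipate is the uniform bound on $Z^n$ in Step~2: the Malliavin differentiability for a merely Lipschitz, non-smooth generator $f_n$ must be justified by a careful smooth mollification, and the Girsanov change of measure involves a drift $\partial_z f_{n,k}$ bounded only by $n$, so the Novikov condition has to be checked at each approximation level. The payoff of this detour is decisive: the $n$-dependence disappears from the final bound because only the $y$-Lipschitz constant $L$ enters the surviving exponential factor, which is precisely what allows the localization step to close the argument.
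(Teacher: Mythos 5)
Your proposal is correct and follows essentially the same route as the paper: inf/sup-convolution with a linear penalty, a uniform $L^\infty$ bound on $Z^n$ coming from the bounded Malliavin derivative of $\xi$, passage to the minimal and maximal solutions, and then localization of $f$ on the ball $|z|\leqslant M$ combined with the Fan--Jiang uniqueness result \cite{ref55}. The only difference is one of presentation: where you sketch the Malliavin/Girsanov derivation of the bound $|Z^n_t|\leqslant Le^{L(T-t)}$, the paper simply invokes Theorem 2.2 and Remark 2.3 of \cite{ref35}, which contain exactly that argument.
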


\begin{proof}
Since the only novelty is the gathering of known results and applying the proof strategy in Section 3, we will only sketch the proof.

\ms

1. First of all, since the terminal condition $\xi$ satisfies Assumption (B1), by Lemma 2.5 of \cite{ref35}, we have $\xi \in L_{\mathscr{F}_{T}}^{2}(\Omega ; \mathbb{R})$. Moreover, since the generator $f$ satisfies Assumption (B2), we regularize $f$ by
$$\check{f}_{n}(t, y, z) \triangleq \inf _{v \in \mathbb{R}^{d}}\left\{f(t, y, v)+n|z-v|\right\},$$
and for $n \geqslant L$, the sequence of functions $\check{f}_{n}(t, y, z)$ satisfies the following properties (see Lemma 1 in \cite{ref30})
\begin{itemize}
  \item [{\rm (i)}] For any $t\in[0,T]$, $y\in\IR$ and $z\in\IR^d$,
$ |\check{f}_n(t,y,z)| \les L(1+|y|+|z|)$;
  \item [{\rm (ii)}] For any $t\in[0,T]$, $y\in\IR$ and $z\in\IR^d$,
$\check{f}_n(t,y,z)$ is increasing in $n$;
  \item [{\rm (iii)}] If $z_n \to z$ as $n\to\infty$, then
$\check{f}_n(t,y,z_n)\to f(t,y,z)$;
  \item [{\rm (iv)}] For any $t\in[0,T]$, $y_1,y_2\in\IR$ and $z_1,z_2\in\IR^d$, $|\check{f}_n(t, y_1,z_1)-\check{f}_n(t, y_2,z_2)|\les L|y_1-y_2|+ n|z_1-z_2|.$
\end{itemize}

\ms

2. For any $n \geqslant L$, we consider the following BSDE
\begin{equation}\label{BSDEjieduan2}
\check{Y}_t^n = \xi + \int^T_t \check{f}_n(r,\check{Y}_r^n,\check{Z}_r^n)dr - \int_t^T \check{Z}_r^n dW_r, \q~    0 \les t \les T,
\end{equation}
with the generator $\check{f}_n$. Now $(\xi , \check{f}_n)$ satisfy the assumptions of Theorem 1 of \cite{ref30} (or Theorem 8.4 of \cite{ref6}), then the unique solution $(\check{Y}^n,\check{Z}^n)$ of the BSDE \eqref{BSDEjieduan2} converges to the minimal solution $(\check{Y},\check{Z})$ of the BSDE \eqref{bsde}, in the space $L_{\mathbb{F}}^{2}(0, T ; \mathbb{R}) \times L_{\mathbb{F}}^{2}\left(0, T ; \mathbb{R}^{d}\right)$ as $n \rightarrow \infty$. In addition, by Theorem 2.2 and Remark 2.3 of \cite{ref35}, we have that the solution $(\check{Y}^n,\check{Z}^n) \in S_{\mathbb{F}}^{2}(0, T ; \mathbb{R}) \times L_{\mathbb{F}}^{\infty}\left(0, T ; \mathbb{R}^{d}\right)$, and $\left|\check{Z}_{t}^{n,i}\right| \leqslant L e^{L(T-t)}$, $d\mathbb{P} \otimes dt$-a.e. for all $i=1, \ldots, d$. So we have $\left|\check{Z}_{t}^{i}\right| \leqslant L e^{L(T-t)}$, $d\mathbb{P} \otimes dt$-a.e. for all $i=1, \ldots, d$.

If we regularize $f$ by $\breve{f}_{n}(t, y, z) \triangleq \sup _{v \in \mathbb{R}^{d}}\left\{f(t, y, v)-n|z-v|\right\}$, and applying the similar arguments, then under Assumptions (B1)-(B2) the BSDE \eqref{bsde} admits the maximal solution $(\breve{Y},\breve{Z}) \in S_{\mathbb{F}}^{2}(0, T ; \mathbb{R}) \times L_{\mathbb{F}}^{\infty}\left(0, T ; \mathbb{R}^{d}\right)$, and $\left|\breve{Z}_{t}^{i}\right| \leqslant L e^{L(T-t)}$, $d\mathbb{P} \otimes dt$-a.e. for all $i=1, \ldots, d$.

\ms

3. Now let us summarize the conclusions we have obtained. The BSDE \eqref{bsde} admits the maximal $(\breve{Y},\breve{Z})$ and minimal solution $(\check{Y},\check{Z})$ in $S_{\mathbb{F}}^{2}(0, T ; \mathbb{R}) \times L_{\mathbb{F}}^{\infty}\left(0, T ; \mathbb{R}^{d}\right)$, respectively.
Moreover, for all $i=1, \ldots, d$, $\left|\breve{Z}^i\right|$ and $\left|\check{Z}^{i}\right|$ are bounded by constant $C \triangleq L e^L$. Since $f$ satisfies Assumption (B2), then $f$ is Lipschitz continuous in $y$ and uniformly continuous in $z$ for $z \in [-C,C]^d $. Furthermore, by Theorem 2 of \cite{ref55}, we have $(\breve{Y},\breve{Z}) \equiv (\check{Y},\check{Z})$.

\end{proof}

\section*{Acknowledgments}

This work is supported by National Key R\&D Program of China (Grant No. 2018YFA0703900), the National Natural Science Foundation of China (Grant Nos. 11871309 and 11371226), the Shandong Provincial Natural Science Foundation (No. ZR2019ZD41). Zhi Yang is supported by the State Scholarship Fund from the China Scholarship Council (No. 201906220089). The main work have been done when this author was a visiting Ph.D. student at Carleton University. The hospitality of Carleton University is appreciated.


\section*{References}


\begin{thebibliography}{99}
\bibitem[Barrieu et al.(2008)]{ref67}Barrieu, P., Cazanave, N., El Karoui, N., 2008. Closedness results for BMO semi-martingales and application to quadratic BSDEs. C. R. Math. Acad. Sci. Paris 346(15), 881--886.



%
%
%

\bibitem[Briand and Elie(2013)]{ref23}Briand, P., Elie, R., 2013. A simple constructive approach to quadratic BSDE with or without delay. Stoch. Process. Appl. 123(8), 2921--2939.

\bibitem[Briand and Hu(2006)]{ref22}Briand, P., Hu, Y., 2006. BSDE with quadratic growth and unbounded terminal value. Probab. Theory Relat. Fields 136(4), 604--618.

\bibitem[Briand and Hu(2008)]{ref08}Briand, P., Hu, Y., 2008. Quadratic BSDEs with convex generators and unbounded terminal conditions. Probab. Theory Relat. Fields 141(4), 543--567.

\bibitem[Briand and Richou(2019)]{ref66}Briand, P., Richou, A., 2019. On the uniqueness of solutions to quadratic BSDEs with non-convex generators. Frontiers in stochastic analysis--BSDEs, SPDEs and their applications Springer Proc. Math. Stat. 289, 89--107.


\bibitem[Cheridito and Nam(2014)]{ref35}Cheridito, P., Nam, K., 2014. BSDEs with terminal conditions that have bounded Malliavin derivative. J. Funct. Anal. 266 (3), 1257--1285.

%


\bibitem[Delbaen et al.(2011)]{ref62}Delbaen, F., Hu, Y., Richou, A., 2011. On the uniqueness of solutions to quadratic BSDEs with convex generators and unbounded terminal conditions. Ann. Inst. Henri Poincare Probab. Stat. 47(2), 559--574.

%


\bibitem[El Karoui et al.(2009)]{ref6}El Karoui, N., Hamadene, S., Matoussi, A., 2009. Backward stochastic differential equations and applications. Indifference pricing: theory and applications, 267--320.
\bibitem[El Karoui et al.(1997)]{ref8}El Karoui, N., Peng, S.G., Quenez, M.C., 1997. Backward stochastic differential equations in finance. Math. Finance 7 (1), 1-71.



\bibitem[Fan et al.(2020)]{ref69}Fan, S.J., Hu, Y., Tang, S.J., 2020.  On the uniqueness of solutions to quadratic BSDEs with non-convex generators and unbounded terminal conditions. C. R. Math. Acad. Sci. Paris 358(2), 227--235.

\bibitem[Fan and Jiang(2011)]{ref55}Fan, S.J., Jiang, L., 2011. Existence and uniqueness result for a backward stochastic differential equation whose generator is Lipschitz continuous in y and uniformly continuous in z. J. Appl. Math. Comput. 36(1), 1--10.

%
%




\bibitem[Kazamaki(2006)]{ref41}Kazamaki, N., 2006. Continuous exponential martingales and BMO. Springer.

\bibitem[Kobylanski(2000)]{ref25}Kobylanski, M., 2000. Backward stochastic differential equations and partial differential equations with quadratic growth. Ann. Probab. 28 (2), 558--602.


\bibitem[Lepeltier and San Martin(1997)]{ref30}Lepeltier. J.P., San Martin. J. 1997. Backward stochastic differential equations with continuous coefficient. Statist. Probab. Lett. 32 (4), 425--430.




\bibitem[Nualart(2006)]{ref28}Nualart, D., 2006. The Malliavin calculus and related topics. Springer.

\bibitem[Pardoux and Peng(1990)]{ref5}Pardoux, E., Peng, S.G., 1990. Adapted solution of backward stochastic differential equation. Systems Control Lett. 14 (1), 55--61.


%

\bibitem[Richou(2012)]{ref81}Richou, A., 2012. Markovian quadratic and superquadratic BSDEs with an unbounded terminal condition. Stoch. Process. Appl. 122(9), 3173--3208.




\bibitem[Zhang(2017)]{ref26}Zhang, J., 2017. Backward Stochastic Differential Equations: From Linear to Fully Nonlinear Theory. volume 86, Springer.



\end{thebibliography}
\end{document}